\DeclareMathOperator{\enm}{End}
\DeclareMathOperator{\ext}{Ext}
\DeclareMathOperator{\id}{id}
\DeclareMathOperator{\hmm}{Hom}
\DeclareMathOperator{\mor}{Mor}
\DeclareMathOperator{\spec}{Spec}
\DeclareMathOperator{\aspec}{aSpec}
\DeclareMathOperator{\simp}{Simp}
\newcommand{\cat}[1]{\mathbf{#1}}
\newcommand{\grmcat}[1]
\newtheorem{proposition}{Proposition}
\newtheorem{theorem}{Theorem}
\newtheorem{lemma}{Lemma}
\newtheorem{corollary}{Corollary}
\newtheorem{definition}{Definition}
\newtheorem{remark}{Remark}
\newtheorem{example}{Example}
\begin{document}
\author{Arvid Siqveland}
\title{Associative Schemes}

\maketitle

\begin{abstract} 
Let $k$ be a field and let $A$ be an associative unital $k$-algebra. For a set of $r>0$ right $A$-modules $M=\{M_1,\dots,M_r\}$ we define the ring of locally defined functions $O^A(M).$ We then define the set $\aspec A$ of aprime modules which contains the simple modules, and give it a topology. For each open $U\subseteq\aspec A$ we let the ring $O^A(U)$ of functions defined on $U$ be the projective limit over inclusions of finite ordered subsets of $U$. This defines a presheaf of rings on $\aspec A$ which projective limit over inclusions of open subsets defines a sheaf $\mathcal O_{\aspec A}(U),$ the sheaf associated to the presheaf. The construction is completely natural and gives a fully faithful embedding of integral schemes to associative schemes. We end by proving that an associative scheme is a fine moduli scheme for its simple points.

%We state results from noncommutative deformation theory of modules over an associative $k$-algebra $A,$  $k$ a field, necessary for this work. We define a set of $A$-modules $\operatorname{aSpec}A$ containing the simple modules, whose elements we call aprime,  for which there exists a topology where the simple modules are the closed points. Applying results from deformation theory we prove that there exists a sheaf of associative rings $\mathcal O_X$ on the topological space $X=\operatorname{aSpec}A$ giving it the structure of a pointed ringed space.  In general, an associative scheme $X$ is a ringed space with an open covering $\{U_i=\operatorname{aSpec}{A_i}\}_{i\in I}.$  When $A$ is a commutative ring, $\operatorname{aSpec}A\simeq\spec A,$ and so the category $\cat{aSch}$ of associative schemes is an extension of the category of schemes $\cat{Sch},$ i.e. there exists a faithfully full functor $I:\cat{Sch}\rightarrow\cat{aSch}.$ Our main result says that any associative scheme over an algebraically closed $\Bbbk,$ named a variety, is a fine moduli for its simple points.
\end{abstract}

\section{Acknowledgements}
This article could not have been written without the cooperation with Severin Barmeier, who had the idea and insight in noncommutative algebraic geometry necessary for the generalization to noncommutative schemes. Also, I thank Gunnar Fløystad, David Ploog, Eivind Eriksen, Jon Eivind Vatne and Ragni Piene for their comments when lecturing different versions during the winter 2022/2023. As always, my  supervisor O. Arnfinn Laudal has been the greatest inspiration and moderator of this work. Finally, Daniel Larsson corrected me on a couple of issues.

\section{Notation and Preliminaries}\label{SpectralSection}
 
All rings $A$ are assumed to be associative unital rings. For a right $A$-module $M$ we denote the structure morphism by $\eta^A_M:A\rightarrow\enm_{\mathbb Z}(M),$ that is, for $m\in M,a\in A,\ m\cdot a=\eta^A_M(a)(m).$ If we write \emph{module} without prefix, we always mean \emph{right} module.
Recall from \cite{ELS17} that an $A$-module $M$ is \emph{simple} if it has no proper sub-modules. When $k$ is a division ring, $A$ is a $k$-algebra and $M$ is of finite dimension over  $k,$ then $\eta^A_M$ surjective implies $M$ simple, and the converse holds if $\enm_A(M)=k,$ in particular this holds when $k$ is an algebraically closed field.

\section{Deformation Theory}

Let $k$ be a field.

\begin{definition} Let $k$ be in the centre of $R.$ Then the $k^r$-algebra $R$ is called $r$-pointed if it is augmented over $k^r$ such that the diagram $$\xymatrix{k^r\ar[r]^\iota\ar[dr]_{\id}&R\ar[d]^{\pi_R}\\&k^r}$$ commutes. A morphism between $r$-pointed algebras is a $k^r$-algebra homomorphism commuting with the augmentation $\pi_R.$ 
The category  of $r$-pointed $k$-algebras is denoted $\cat{Alg}^r_k$.
\end{definition}

\begin{definition} Two morphisms $\phi,\psi:R\rightarrow S$ in $\cat{Alg}^r_k$ are called smooth equal if the induced linear transformations $\phi_n,\psi_n:R/\ker^n\pi_R\rightarrow S/\ker^n \pi_S$ are equal for all $n\geq 2.$
\end{definition}

Recall from \cite{Lam01} that an associative ring $A$ is called \emph{local} if it has a unique maximal right ideal $\mathfrak m$. Then this ideal is also a left ideal, and this is equivalent to the condition that for every $x\in A$ either $x$ or $1-x$ is a unit.

\begin{lemma}\label{loccompLemma} Let $R$ be a $1$-pointed $k$-algebra, i.e. fitting in the diagram $$\xymatrix{k\ar[dr]_{\id}\ar[r]^\iota&R\ar[d]^\pi\\&k.}$$  If $(\ker\pi)^n=0$ for some $n\geq 1,$ then if $\pi(r)\neq 0,$ then $r$ is a unit in $R.$ In particular, $\mathfrak m=\ker\pi$ is a unique maximal ideal and $R$ is a local ring.
\end{lemma}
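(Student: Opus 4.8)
The plan is to show first that every element $r$ with $\pi(r) \neq 0$ is a unit, and then deduce that $\mathfrak{m} = \ker\pi$ is the unique maximal right ideal, invoking the characterization of local rings recalled from \cite{Lam01}.

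For the first step, suppose $\pi(r) = \lambda \neq 0$ in $k$. Then $r = \iota(\lambda) + m$ where $m = r - \iota(\lambda) \in \ker\pi = \mathfrak{m}$, since $\pi(r - \iota(\lambda)) = \lambda - \lambda = 0$ (using that $\pi \circ \iota = \id$). Because $\lambda$ is a nonzero element of the field $k$, it is invertible, and $\iota(\lambda^{-1})$ is a two-sided inverse of $\iota(\lambda)$ in $R$ lying in the central copy of $k$. So it suffices to show that $\iota(\lambda) + m$ is a unit, equivalently (after multiplying by $\iota(\lambda^{-1})$, which is central) that $1 + m'$ is a unit for $m' = \iota(\lambda^{-1}) m \in \mathfrak{m}$. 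Here I use the nilpotence hypothesis: since $(\ker\pi)^n = 0$, we have $(m')^n = 0$, so the finite geometric series $1 - m' + (m')^2 - \cdots + (-1)^{n-1}(m')^{n-1}$ is a genuine element of $R$ and is a two-sided inverse of $1 + m'$ by the usual telescoping computation. Hence $r$ is a unit.

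For the second step, $\mathfrak{m} = \ker\pi$ is a two-sided ideal of $R$ (being the kernel of a ring homomorphism), and it is proper since $\pi$ is surjective onto $k \neq 0$. Any right ideal $J$ not contained in $\mathfrak{m}$ contains some $r$ with $\pi(r) \neq 0$, hence contains a unit by the first step, hence $J = R$. Therefore $\mathfrak{m}$ is the unique maximal right ideal, and by the cited characterization $R$ is local with maximal ideal $\mathfrak{m}$.

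I do not anticipate a serious obstacle here; the only point requiring a little care is keeping the noncommutativity in mind — checking that $\iota(\lambda^{-1})$ is genuinely central (which holds because $k$ is in the centre of $R$ by the standing hypothesis on $1$-pointed algebras, so that the reduction to the $1 + m'$ case is legitimate) and that the geometric-series inverse works as a \emph{two-sided} inverse, which it does since $m'$ commutes with its own powers. Everything else is the standard "nilpotent ideal lies in the Jacobson radical" argument specialized to this pointed setting.
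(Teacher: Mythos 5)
Your proof is correct and follows essentially the same route as the paper's: decompose $r$ as a central unit $\iota(\lambda)$ plus a nilpotent element of $\ker\pi$, invert via a finite geometric series (the paper writes the inverse of $u-x$ directly as $\sum_{i=1}^n u^{-i}x^{i-1}$ rather than first normalizing to $1+m'$, but the computation is the same), and then observe that any proper right ideal consists of non-units and hence lies in $\ker\pi$. Your added care about centrality of $\iota(k)$ and the right-ideal formulation of locality is welcome but does not change the argument.
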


\begin{proof} $r\notin\ker\pi\Rightarrow\pi(r)=\alpha=\pi(\iota(\alpha))\Rightarrow r-\iota(\alpha)\in\ker\pi\Rightarrow r=u-x$ where $u=\iota(\alpha)$ is a unit and $x\in\ker\pi.$ Let $t=\sum_{i=1}^n u^{-i}x^{i-1}.$ Then $rt=tr=1.$ Any proper ideal in  $R$ consists of non-units, hence is contained in $\ker\pi.$ Thus this is the unique ideal in $R,$ which is local. 
\end{proof}

\begin{proposition}\label{complProp} Consider a sequence of $1$-pointed $k$-algebras $$\cdots\overset{\pi_n}\rightarrow R_{n}\overset{\pi_{n-1}}\rightarrow R_{n-1}\overset{\pi_{n-2}}\rightarrow\cdots\overset{\pi_{2}}\rightarrow R_2\overset{\pi_{1}}\rightarrow k$$ such that for each $n,$ $(\ker\pi_{n-1})\mathfrak m_n=0,\ \mathfrak m_n=\ker\pi_{n-1}.$ Then $\underset{\underset{n>0}\leftarrow}\lim\  R_n=\hat R$ is a local ring.
\end{proposition}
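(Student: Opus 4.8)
The plan is to show that $\hat R = \varprojlim R_n$ is $1$-pointed and that its augmentation ideal is nilpotent "in the limit" in a way that lets Lemma~\ref{loccompLemma} apply, or rather a direct generalization of its argument. First I would observe that each $R_n$ is a $1$-pointed $k$-algebra and that the $\pi_n$ are morphisms of $1$-pointed algebras, so the inverse limit $\hat R$ carries a canonical $k$-algebra structure together with a structure map $\iota\colon k\to\hat R$ (the limit of the $\iota_n$, which are compatible since all diagrams commute) and an augmentation $\pi\colon\hat R\to k$ given by projection to $R_2$ followed by $\pi_1$ (equivalently, the compatible system of augmentations induces a map to $\varprojlim k = k$). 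One checks $\pi\circ\iota=\id_k$, so $\hat R$ is $1$-pointed with $\mathfrak m := \ker\pi$.

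Next I would identify $\mathfrak m$ explicitly. Writing $\mathfrak m_n = \ker(R_n\to k)$ for the augmentation ideal of $R_n$, the projection $\hat R\to R_n$ carries $\mathfrak m$ into $\mathfrak m_n$, and in fact $\mathfrak m = \varprojlim \mathfrak m_n$ since taking kernels commutes with inverse limits. The key structural input is the hypothesis $(\ker\pi_{n-1})\mathfrak m_n = 0$ with $\mathfrak m_n = \ker\pi_{n-1}$: this says $\mathfrak m_n^2 = 0$ inside $R_n$... wait, more precisely it controls how $\mathfrak m_n$ sits over $\mathfrak m_{n-1}$. I would use it to show by induction that the image of $\mathfrak m$ in $R_n$ satisfies $(\text{image of }\mathfrak m)^{n} = 0$ in $R_n$, so that any element $x\in\mathfrak m$ is "topologically nilpotent": its image in each $R_n$ is nilpotent with a uniform bound depending only on $n$.

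Now I would run the unit-construction argument of Lemma~\ref{loccompLemma} levelwise and check compatibility. Given $r\in\hat R$ with $\pi(r)=\alpha\neq 0$, write $r = u - x$ with $u=\iota(\alpha)$ a unit and $x\in\mathfrak m$. For each $n$, the image $x_n$ of $x$ in $R_n$ satisfies $x_n^{n}=0$, so $t_n := \sum_{i=1}^{n} u_n^{-i}x_n^{i-1}$ is a two-sided inverse of $r_n$ in $R_n$ by exactly the computation in Lemma~\ref{loccompLemma}. The elements $t_n$ are compatible under the $\pi$'s (inverses are unique, and the $\pi_n$ are ring maps, so they send $t_{n}$ to the inverse of $r_{n-1}$, which is $t_{n-1}$), hence define $t = (t_n)\in\hat R$ with $rt = tr = 1$. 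Thus every element outside $\mathfrak m$ is a unit, every proper ideal is contained in $\mathfrak m$, and $\hat R$ is local with maximal ideal $\mathfrak m$.

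The main obstacle I anticipate is the bookkeeping in the inductive step: the hypothesis is phrased as a relation between consecutive levels ($(\ker\pi_{n-1})\mathfrak m_n=0$ and $\mathfrak m_n = \ker\pi_{n-1}$), not as a clean statement $\mathfrak m_n^{n}=0$, so one must carefully track how multiplying $n$ elements of the augmentation ideal successively drops the filtration degree, using that $\pi_{n-1}$ identifies $R_{n-1}$ with $R_n/\ker\pi_{n-1}$ and that $\ker\pi_{n-1}\cdot\ker\pi_{n-1}\subseteq \ker\pi_{n-1}\cdot\mathfrak m_n = 0$. Everything else — the existence of the limit algebra, the $1$-pointed structure, the levelwise unit computation, and the compatibility of the inverses — is routine once that nilpotence bound is in hand.
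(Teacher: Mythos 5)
Your proof is correct and follows essentially the same route as the paper: write $r=u-x$ with $u$ a unit and $x\in\ker\pi$, and invert $r$ by a geometric series, which the paper writes as the single element $t=\sum_{n\ge 0}u^{-(n+1)}x^n\in\hat R$ while you build it as the compatible system of its finite truncations in each $R_n$ — the same element. Your additional induction showing the image of $\mathfrak m$ in $R_n$ is nilpotent is a detail the paper leaves implicit, and it is the right justification for why the series makes sense.
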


\begin{proof} By definition, $\hat R$ fits in the diagram $$\xymatrix{k\ar[r]\ar[dr]_\id&\hat R\ar[d]^\pi\\&k.}$$ With the notation from the proof of Lemma \ref{loccompLemma}, it follows that when $r\notin\ker\pi,$ we can put $t=\sum_{n=0}^\infty u^{-(n+1)}x^n\in\hat R$ and then $rt=tr=1.$ When every $r\notin\ker\pi$ is a unit, $\hat R$ is a local ring with (unique) maximal ideal $\ker\pi.$
\end{proof}

\begin{definition} An $r$-pointed ring $\hat{R}$ is called \emph{formal} if it is the projective limit of a sequence of $r$-pointed $k$-algebras $$\cdots\overset{\pi_n}\rightarrow R_{n}\overset{\pi_{n-1}}\rightarrow R_{n-1}\overset{\pi_{n-2}}\rightarrow\cdots\overset{\pi_{2}}\rightarrow R_2\overset{\pi_{1}}\rightarrow k^r$$ such that for each $n,$ $(\ker\pi_{n-1})\mathfrak m_n=0,\ \mathfrak m_n=\ker\pi_{n-1}.$ That is, $\underset{\underset{n>0}\leftarrow}\lim\  R_n=\hat R.$ 
\end{definition}

The following is the most fundamental result in \cite{ELS17} and is proved there.

\begin{theorem}\label{FTD} Let $k$ be a field,  $A$ a $k$-algebra and $\tilde M=\{M_1,\dots,M_r\}$ a set of $r$ right $A$-modules such that $\dim_k(\ext^1_A(M_i,M_j))<\infty$. Then the noncommutative deformation functor has a pro-representing hull $\hat H(\tilde M),$ i.e. there exists a formal $r$-pointed $k$-algebra $\hat H(\tilde M)$ fitting in a diagram \begin{equation}\label{diag1}\xymatrix{A\ar[dr]_{\eta}\ar[r]^-\rho&\hat H(\tilde M)\otimes_{k^r}(\hmm_k(M_i,M_j))=\hat O^A(\tilde M)\ar[d]^\pi\\ &\oplus_{i=1}^r\enm_k(M_i)}\end{equation} such that if $\hat G(\tilde M)$ is another $r$-pointed algebra fitting in the diagram, then there exists a formally unique commuting morphism $\hat H(\tilde M)\rightarrow\hat G(\tilde M).$
\end{theorem}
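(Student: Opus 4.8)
The plan is to realise $\hat H(\tilde M)$ as the projective limit of a tower of Artinian $r$-pointed algebras, built step by step from the tangent and obstruction cohomology of the functor, and then to exhibit over this limit a versal family — which is exactly the content of diagram (\ref{diag1}). First I would restrict the noncommutative deformation functor $\defm{\tilde M}\colon\cat{Alg}^{r}_{k}\to\sets$ to the full subcategory $\acat{r}$ of Artinian $r$-pointed $k$-algebras (those $R$ with $\dim_{k}R<\infty$ and $\mathfrak m=\ker\pi$ nilpotent), since a pro-representing hull for this restriction automatically pro-represents $\defm{\tilde M}$ on all formal $r$-pointed algebras upon passing to the limit. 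The existence of such a hull is governed by the $r$-pointed analogue of Schlessinger's conditions: $\defm{\tilde M}(k^{r})$ is a single point; for every small surjection $R''\to R$ in $\acat{r}$ the canonical map $\defm{\tilde M}(R'\times_{R}R'')\to\defm{\tilde M}(R')\times_{\defm{\tilde M}(R)}\defm{\tilde M}(R'')$ is surjective, and bijective when moreover $R=k^{r}$; and the tangent space is finite dimensional.

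Next I would identify the relevant cohomology. A standard computation identifies the tangent space $\infdef$ of $\defm{\tilde M}$ with
\[
V:=\bigoplus_{i,j=1}^{r}\ext^{1}_{A}(M_{i},M_{j}),
\]
which is finite dimensional precisely by the hypothesis $\dim_{k}\ext^{1}_{A}(M_{i},M_{j})<\infty$; this furnishes the last Schlessinger condition. Dually, for a small surjection $u\colon R''\to R$ with kernel $I$ (so $I\,\mathfrak m_{R''}=0$) and a deformation $\xi\in\defm{\tilde M}(R)$, one attaches to $(\xi,u)$ an obstruction class in $\obstrdef\otimes_{k}I$ with $\obstrdef=\bigoplus_{i,j}\ext^{2}_{A}(M_{i},M_{j})$; this class is natural in $u$, is additive, and vanishes if and only if $\xi$ lifts along $u$. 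No finiteness is imposed on $\ext^{2}$.

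With tangent and obstruction theory available, the hull is constructed inductively. Let $\hat T$ be the complete free $r$-pointed $k$-algebra on the $k^{r}$-bimodule $V^{\vee}$, and set $R_{2}=\hat T/\mathfrak m^{2}=k^{r}\oplus V^{\vee}$; it carries the universal first-order deformation $\xi_{2}$. Given $R_{n}$, a quotient of $\hat T/\mathfrak m^{\,n+1}$ with a deformation $\xi_{n}$ versal to order $n$, lift $\xi_{n}$ one order higher at the cochain level, read off the obstruction in $\obstrdef$ tensored with the top graded piece, dualise it into a finite batch of relations, and let $R_{n+1}$ be the quotient of $\hat T/\mathfrak m^{\,n+2}$ by these relations together with those already imposed; the vanishing criterion then produces a lift $\xi_{n+1}$ of $\xi_{n}$ over $R_{n+1}$. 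Put $\hat H(\tilde M)=\varprojlim_{n}R_{n}$. Each surjection $R_{n}\to R_{n-1}$ has kernel annihilated by $\mathfrak m_{R_{n}}$, so by Proposition \ref{complProp} (in its evident $r$-pointed version) $\hat H(\tilde M)$ is local, hence a formal $r$-pointed $k$-algebra. The $\xi_{n}$ are compatible and assemble to a family over $\hat H(\tilde M)$; minimality of the relations at each stage makes it versal and induces an isomorphism on tangent spaces. Spelled out, this family is an $A$-module structure on the projective right module $\hat H(\tilde M)\otimes_{k^{r}}\bigl(\hmm_{k}(M_{i},M_{j})\bigr)$ over the ring $\hat O^{A}(\tilde M)$, equivalently a $k$-algebra homomorphism $\rho\colon A\to\hat O^{A}(\tilde M)$ lifting $\eta=\oplus_{i}\eta^{A}_{M_{i}}$ along the augmentation $\pi$, which is diagram (\ref{diag1}).

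The main obstacle is the step producing the obstruction theory: one must set up the obstruction morphism rigorously — showing the obstruction class is independent of the chosen cochain lifts, natural under morphisms of small extensions, and additive — since this is precisely what makes the relations defining each $R_{n+1}$ canonical and forces versality of the limiting family. This is the content of the matric Massey product / obstruction calculus of \cite{ELS17}; once it is granted, verifying the Schlessinger conditions and the convergence of the tower is essentially formal.
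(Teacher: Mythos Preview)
The paper does not actually prove Theorem~\ref{FTD}: it states explicitly that ``the following is the most fundamental result in \cite{ELS17} and is proved there,'' and gives no argument of its own. So there is nothing to compare your proposal against within the paper itself.

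That said, your sketch is a faithful outline of the proof that appears in \cite{ELS17}: one sets up the noncommutative deformation functor on Artinian $r$-pointed algebras, identifies the tangent space with $\bigoplus_{i,j}\ext^{1}_{A}(M_{i},M_{j})$ (finite by hypothesis) and the obstruction space with $\bigoplus_{i,j}\ext^{2}_{A}(M_{i},M_{j})$, and then runs the Laudal--Schlessinger tower construction on the completed free $r$-pointed algebra on the dual of the tangent space, cutting by obstructions (encoded via matric Massey products) at each stage. The resulting projective limit carries a versal family, which unwinds to the homomorphism $\rho$ in diagram~(\ref{diag1}). Your identification of the delicate point --- the well-definedness, naturality, and additivity of the obstruction class --- is accurate; this is exactly the obstruction calculus developed at length in \cite{ELS17}. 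In short, you have supplied what the paper deliberately outsources, and your route is the same one the cited reference takes.
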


\begin{remark}\label{remark 1} Let $A$ be a unital associative ring. For $M\in\simp(A)$ we know that $\enm_A(M)$ is a division ring. Assume that the characteristic of $\enm_A(M)$ is equal for all  $M\in\simp(A).$ This says that there exists a number $p$ which is either prime or zero such that $\operatorname{char}(\enm_A(M))=p,\ M\in\simp(A).$
Consider the set of fields $S=\{K|K\hookrightarrow\enm_A(M)\ \forall M\in\simp(A)\}.$ By the condition on $A$ either $\mathbb Z_p$ or $\mathbb Q$ is in $S$ so that the set is non-empty. Ordering $S$ by inclusion of fields, any sequence $\{K_\alpha\}_{\alpha\in I}$ has an upper bound $K=\cup_{\alpha\in I}K_\alpha.$ It follows by Zorn's lemma that there is a choice of at least one maximal such field $K(S).$ Define the algebra $K(S)[A]=\{\alpha a|\alpha\in K(S), a\in A\}$ with multiplication given by  the assumption that $K(S)\subset Z(K(S)[A]).$ For any set $\tilde M$ of simple $A$-modules, Theorem \ref{FTD} applies with the algebra $K(S)[A].$
\end{remark}

Consider the diagram (\ref{diag1}) and let $O_\rho^A(\tilde M)$ be the subalgebra of $\hat O^A(\tilde M)$ generated by $\rho(A)$ together with $\rho(a)^{-1}$ whenever $\rho(a)$ is a unit. It follows from the universal property of $\hat O^A(M)$ that this algebra is independent of the choice of $\rho$ and the following is well defined.

\begin{definition}\label{defTheo} Let $A$ be an associative $k$-algebra and $\tilde M=\{M_1,\dots,M_r\}$ a set of $r$ right $A$-modules. Then $O^A(\tilde M)$ is the subalgebra of $\hat{O}^A(\tilde M)$ generated by $\rho(A)$ together with the set of inverses $\rho(a)^{-1}$ whenever $\rho(a)$ is a unit.
\end{definition}

%\begin{theorem}\label{defTheo} Let $A$ be an associative $k$-algebra and $\tilde M=\{M_1,\dots,M_r\}$ a set of $r$ right $A$-modules. Then there exists a $k^r$-algebra  $O^A(\tilde M)$  fitting in the diagram 

%\begin{equation}\label{locdiag}\xymatrix{A\ar[r]^\rho\ar[dr]_{\eta}& O^A(\tilde M)\ar[d]^{\tilde\pi}\ar@{^{(}->}[r]&\hat O^A(\tilde M)\ar[dl]_{\pi}\\&\oplus_{i=1}^r\enm_k(M_i)&}\end{equation}
%such that if  $\eta(a)=\id\cdot\alpha,$  then  $\rho(a)\in O^A(\tilde M)$ is invertible, and such that for any other $B,\rho'$ with this property, there is a unique morphism $\phi:O^A(\tilde M)\rightarrow B$ such that $\phi\circ\rho=\rho'.$ Then $O^A(\tilde M)$ is uniquely determined by this universal property.
%\end{theorem}

%\begin{proof} Consider the diagram (\ref{diag1}).
%Let $O^A(\tilde M)=\im(\rho)\subseteq\hat O^A(\tilde M)$ together with $\rho(a)^{-1}$ when $\eta(a)=\id\cdot\alpha.$ Then $ O^A(\tilde M)$ fits in the diagram, and if $\eta(a)\in\oplus_{i=1}^r\enm_k(M_i)$ is the identity times a constant $\alpha\in k$, it follows from Proposition \ref{complProp} that $\rho(a)$ is a unit. If $B$ is any other ring fitting in the diagram via $\rho'$, the morphism $\phi:O^A(\tilde M)\rightarrow B$ given by $\phi(\rho(f)\rho(g)^{-1})=\rho'(f)\rho'(g)^{-1}$ and correspondingly for all other combinations, is uniquely determined.
%\end{proof}

\begin{example}\label{ComlocEx} Let $\mathfrak p\subset A$ be a prime ideal in a commutative integral domain which is a  $k$-algebra with $k$ algebraically closed. Then we have the composition $A\rightarrow A_{\mathfrak p}\rightarrow\hat A_{\mathfrak p}$ where the first homomorphism is an isomorphism by Krull's Theorem. Thus $O^A(A/\mathfrak p)=A_{\mathfrak p}.$ Then also $O^{A_{\mathfrak p}}(A_{\mathfrak p}/\mathfrak pA_{\mathfrak p})\simeq A_{\mathfrak p}.$
If $\tilde P=\{A/\mathfrak p_i\}_{i=1}^r$ is a set of $r$ prime modules, then $O^A(\tilde P)\simeq\prod_{i=1}^r A_{\mathfrak p_i}.$
\end{example}

\begin{proposition} For a finite family of $A$-modules $\tilde M$, the $k$-algebra $O^A(\tilde M)$ satisfies  $$O^{O^A(\tilde M)}(\tilde M)=O^A(\tilde M).$$ Thus the $O$-construction is a closure operation.
\end{proposition}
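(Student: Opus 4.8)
Write $R:=O^A(\tilde M)\subseteq\hat O^A(\tilde M)$ and put $S=\{a\in A:\eta(a)=\id\cdot\alpha\text{ for some }\alpha\in k^\times\}$, a multiplicative set since $\eta$ is multiplicative; by the construction in the proof of Theorem \ref{defTheo}, $R$ is the subring of $\hat O^A(\tilde M)$ generated by $\rho(A)$ together with the inverses $\rho(s)^{-1}$, $s\in S$. The plan is to show that $(R,\id_R)$ already satisfies the universal property that characterises $O^{R}(\tilde M)$ in Theorem \ref{defTheo}, so that the uniqueness clause there yields $O^{R}(\tilde M)=R$. Here one first observes that, by diagram (\ref{locdiag}), the map $\tilde\pi:R\to\oplus_{i=1}^r\enm_k(M_i)$ is exactly the total structure morphism $\eta^{R}$ of the family $\tilde M$ viewed over $R$, and $\eta^{R}\circ\rho=\eta$; thus the $R$-action on each $M_i$ restricts along $\rho$ to the given $A$-action, and moreover $\dim_k\ext^1_R(M_i,M_j)<\infty$, since restriction along $\rho$ embeds $\ext^1_R(M_i,M_j)$ into $\ext^1_A(M_i,M_j)$ (a short exact sequence of these modules split over $A$ is automatically split over $R$, the $A$-linear splitting being $R$-linear by invertibility of the $\rho(s)$).

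Next comes the reduction. Since $(R,\id_R)$ sits in diagram (\ref{locdiag}) for the base ring $R$ with the same $\tilde\pi$, Theorem \ref{defTheo} reduces the problem to two points: (i) $b$ is a unit of $R$ whenever $\eta^{R}(b)=\id\cdot\alpha$ with $\alpha\in k^\times$; and (ii) $(R,\id_R)$ is initial among all pairs $(B,\rho')$ with property (i). Granting (i), point (ii) is formal: every such $\rho'$ factors uniquely through $O^{R}(\tilde M)$, and the canonical map $\rho_O:R\to O^{R}(\tilde M)$ is an isomorphism, being mutually inverse to the map $O^{R}(\tilde M)\to R$ induced by $(R,\id_R)$ — one verifies this directly from the two uniqueness clauses. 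Hence the whole proposition collapses to the single claim $(\ast)$: every $b\in R$ with $\tilde\pi(b)=\id\cdot\alpha$, $\alpha\in k^\times$, is a unit of $R$.

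Claim $(\ast)$ is the main obstacle. The clean way to dispatch it is to invoke the local structure of $R=O^A(\tilde M)$: by Corollary \ref{locCor} — and, in this generality, the structural results of \cite{ELS17} — $R$ is $r$-local, its maximal ideals being exactly $\mathfrak m_i=\ker\eta^{R}_{M_i}$ and every non-unit lying in some $\mathfrak m_i$; since $\tilde\pi(b)=\alpha\,\id$ with $\alpha\ne0$ forces $b\notin\mathfrak m_i$ for every $i$, $b$ is a unit. (In the variety setting, where $\Bbbk$ is algebraically closed, $\enm_\Bbbk(M_i)=\Bbbk$ is central and Corollary \ref{locCor} applies verbatim.) Alternatively one argues directly inside $\hat O^A(\tilde M)$: writing $\alpha^{-1}b=1-x$ with $x\in\ker\pi$, the series $\sum_{n\ge0}x^n$ converges there by the argument of Proposition \ref{complProp}, so $b$ is invertible in $\hat O^A(\tilde M)$; it then remains to show $b^{-1}\in R$, which one does by using the description of $R$ above to reduce to the case $b=\rho(s)$, $s\in S$, where $b^{-1}=\rho(s)^{-1}\in R$ by construction. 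I expect this last reduction — from a general element of $R$ mapping to a scalar to the conclusion that its inverse again lies in $R$ — to be the genuinely delicate step, and so would lean on the $r$-locality of $O^A(\tilde M)$.

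Once $(\ast)$ is established, $O^{O^A(\tilde M)}(\tilde M)=O^A(\tilde M)$ follows, i.e. the $O$-construction is idempotent; in particular one recovers the commutative instance of Example \ref{ComlocEx}, namely $O^{A_{\mathfrak p}}(A_{\mathfrak p}/\mathfrak p A_{\mathfrak p})=A_{\mathfrak p}$ for a prime $\mathfrak p\subset A$, which holds simply because $A_{\mathfrak p}$ is already local.
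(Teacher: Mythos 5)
Your proof is sound in outline but takes a genuinely different route from the paper's. The paper's (very terse) argument runs through the pro-representing hull: it asserts that $\rho$ induces an isomorphism $\hat O^A(\tilde M)\to\hat O^{O^A(\tilde M)}(\tilde M)$ --- i.e.\ the deformation-theoretic data of $\tilde M$ over $A$ and over $O^A(\tilde M)$ coincide, since every deformation factors through the hull --- and then invokes the uniqueness clause of Theorem \ref{defTheo}. You instead verify directly that the pair $(O^A(\tilde M),\id)$ satisfies the universal property of Theorem \ref{defTheo} over the base ring $O^A(\tilde M)$ itself, which correctly reduces everything to your claim $(\ast)$: any $b\in O^A(\tilde M)$ with $\tilde\pi(b)=\alpha\,\id$, $\alpha\neq 0$, is already a unit of $O^A(\tilde M)$. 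This is a real gain in transparency, because the paper's route needs the same fact and never mentions it: even granting the isomorphism of hulls, $O^{O^A(\tilde M)}(\tilde M)$ is by construction the image of $O^A(\tilde M)$ in the hull \emph{together with} the inverses of all elements mapping to invertible scalars, and for this to reproduce $O^A(\tilde M)$ rather than something strictly larger is precisely $(\ast)$. Your formal reduction (the ping-pong between the two uniqueness clauses) and your check that the $A$- and $O^A(\tilde M)$-module structures on the $M_i$, and their $\ext^1$ groups, are compatible, are both correct.

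The one caveat --- which you flag yourself --- is that your discharge of $(\ast)$ via Corollary \ref{locCor} imports that corollary's hypotheses ($\enm_k(M_i)$ a division ring contained in the centre of $\hat O^A(\tilde M)$), which the Proposition does not assume; and even under those hypotheses, ``every proper ideal is contained in some $\mathfrak m_i$'' does not by itself force every non-unit of a noncommutative ring into some $\mathfrak m_i$ (think of a simple Artinian ring). Your fallback --- invert $b$ in $\hat O^A(\tilde M)$ by the geometric series and then show $b^{-1}$ lies in the subring $O^A(\tilde M)$ --- correctly isolates the remaining difficulty without resolving it. So in the stated generality your proof and the paper's leave the same point open; in the settings the paper actually uses (e.g.\ $\Bbbk$ algebraically closed and the $M_i$ simple), your argument via the $r$-locality of $O^A(\tilde M)$ is the right way to close it.
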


\begin{proof} 
This follows by uniqueness of the algebra $\hat{O}^A(\tilde M)$  by the commutativity of the diagram
$$\xymatrix{&O^{O^A(\tilde M)}(\tilde M)\ar[dr]&\\A\ar[ur]\ar[dr]&&\hat{O}^{O^A(\tilde M)}(\tilde M)\\&O^A(\tilde M)\ar[ur]&}$$
\end{proof}

\section{Associative Schemes}

\begin{definition}
Let $f:A\rightarrow B$ be a homomorphism between associative rings and let $M_B$ be a right $B$-module. The $f$-contraction of $M_B$ is the right $A$-module $(M_B)^f_A=f(A)\otimes_A M_B.$
\end{definition}

\begin{lemma}\label{trivialcontractionlemma} Let $\eta^B_{M_B}:B\rightarrow\enm_{\mathbb Z}(M_B)$ be the structure morphism of $M_B.$ Then the structure morphism of $M_A=(M_B)^f_A$ is $\eta^A_{M_A}=\eta^B_{M_B}\circ f.$
\end{lemma}

\begin{example} When $A$ is a commutative, finitely generated $k$-algebra with $k$ algebraically closed, and $M=A/\mathfrak m$ is a simple  $A$-module, we have the diagram $$\xymatrix{A\ar[r]^-f\ar[dr]&O^A(M)=B\ar[d]^{\eta_B}\\&\enm_k(M)}$$ with $B$ local. Thus $B$ has a unique maximal ideal $\mathfrak m\subset B$ such that $M_B\simeq B/\mathfrak m.$ Then we have an isomorphism $A/f^{-1}(\mathfrak m)\overset\sim\rightarrow (M_B)_A^f=f(A)\otimes_A B/\mathfrak m.$
\end{example}

\begin{definition} A right $A$-module $P_A$ is called aprime if there exists a ring $B,$ a simple right $B$-module $M_B$ and a ring homomorphism $f:A\rightarrow B$ such that $P_A=(M_B)^f_A.$
\end{definition}

Notice that a simple module is aprime as a contraction via the identity. When $A$ is a commutative ring, an aprime module $P_A=(B/\mathfrak m)^f_A$ is isomorphic to $A/f^{-1}(\mathfrak m).$  This gives a bijective correspondence between aprime modules and prime ideals.

If $A$ is a commutative ring and $P_A$ is aprime, then an aprime ideal of $P_A$ is a prime ideal $\mathfrak p.$

\begin{lemma} The contraction of an aprime module is aprime.
\end{lemma}

\begin{proof} Let $f:B\rightarrow C$ be a ring homomorphism and $M_C$ simple, such that an aprime module is given as $P_B=(M_C)^f_B=f(B)\otimes_B M_C.$ Let $g:A\rightarrow B$ be a ring homomorphism, then the contraction of $P_B$ is $$(P_B)^g_A=g(A)\otimes_A P_B=g(A)\otimes_A(f(B)\otimes_B M_C)\simeq f(g(A))\otimes_A M_C=(M_C)^{fg}_A.$$ Or we can use Lemma \ref{trivialcontractionlemma} for a more natural proof.
\end{proof}

%Also notice that when we are given a ring homomorphism $f:C\rightarrow A,$ we have the diagram $$\xymatrix{A\ar[r]&O^A(P)\ar[r]&O^B(P)\ar[d]& \\C\ar[u]_f \ar[r]&O^C(P)\ar[u]_{f^\ast}\ar[r]&\enm_{\mathbb Z}(P).}$$  This proves that the contraction of an aprime module is aprime, and this contraction corresponds to the contraction of prime ideals in the commutative situation.

\begin{definition} We let $\operatorname{aSpec}(A)$ denote the set of right aspectral $A$-modules. For $f\in A$ we  put $D(f)=\{P\in\operatorname{aSpec}(A)|\eta^A_P(f)\text{ is  injective}\}$ and we give $\operatorname{aSpec}(A)$ the topology generated by the sub-basis $\{D(f)\}_{f\in A}.$
\end{definition}

\begin{lemma}\label{continlemma}
Let $\phi:A\rightarrow B$ be a ring homomorphism. Then the induced map $\aspec(\phi):\aspec B\rightarrow\aspec A$ given by $\aspec(\phi)(P)=P^\phi_A$ is continuous.
\end{lemma}

\begin{proof} $\aspec(\phi)^{-1}(D(f))=D(\phi(f)).$
\end{proof}

\begin{definition}\label{topsheafDef} We define a sheaf of rings $\mathcal O_X$ on $X=\operatorname{aSpec}A$ by:

(i) First, define the presheaf $$O_X(U)=\underset{\underset{\tilde M\subseteq U}\leftarrow}\lim O^A(\tilde M)$$ where the limit is taken over all finite subsets $\tilde M$ of simple right $A$-modules. 

(ii) Then, define the sheaf $$\mathcal O_X(U)=\underset{\underset{V \subsetneqq U}\leftarrow}\lim O_X(V).$$
\end{definition}

Let us prove that Definition \ref{topsheafDef} is equivalent to the classical definition of a sheaf (of abelian groups) in algebraic geometry. So let $U$ be an open set covered by the family $\{V_i\}$ and assume that $s\in\mathcal F(U)$ maps to $0=s|_{V_i}\in\mathcal F(V_i)$ for all $i.$ Then the zero homomorphism $\mathbb Z\rightarrow\mathcal F(U)$ for all $V\subsetneq U$ lifts uniquely to $\mathcal F(U)$ so it has to map $1$ to $s=0\in\mathcal F(U).$ Assume that we have elements $s_i\in\mathcal F(V_i)$ for all $i$ such that for all $i,j,$ $s_i|_{V_i\cup V_j}=s_j|_{V_i\cup V_j}\in\mathcal F(U_i\cap U_j).$ The algebra $\mathbb Z[s]$ maps naturally to all $\mathcal F(V_i)$ by sending $s$ to $s_i|V_i,$ and so (by definition of projective limits), there exists a unique homomorphism $\iota:\mathbb Z[s]\rightarrow\mathcal F(U).$ Then $s=\iota(s)\in\mathcal F(U)$ is the unique lifted element.

\begin{definition} For an associative unital ring $A$ we define $\operatorname{aSpec A}$ as the ringed space $(\operatorname{aSpec} A, \mathcal O_{\operatorname{aSpec} A}).$
\end{definition}

\begin{definition} Let $\tilde P=\{P_1,...,P_r\}\subseteq X=\operatorname{aSpec}A$ be a finite set. We define the stalk of any presheaf $F_X$ on $X$ in $\tilde P$ as the inductive limit $$F_{X,\tilde P}=\underset{\underset{\tilde P\subseteq U}\rightarrow}\lim\ F_X(U).$$ 
\end{definition}

\begin{lemma} The stalk in $\tilde P$ of the sheaf $\mathcal O_X$ is isomorphic to the stalk in $\tilde P$ of the presheaf $O_X.$
\end{lemma}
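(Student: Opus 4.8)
This is the standard fact that passing to the associated sheaf does not alter stalks, here in the guise of the separation construction $\mathcal O_X(U)=\varprojlim_{V\subsetneq U}O_X(V)$. I would prove it by producing an explicit inverse to a natural comparison map, the whole argument being a cofinality computation.

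\emph{Step 1: the comparison map.} For each open $U$ the restriction morphisms $O_X(U)\to O_X(V)$ of the presheaf $O_X$, as $V$ runs over the opens with $V\subsetneq U$, are compatible with the transition maps of the inverse system $\bigl(O_X(V)\bigr)_{V\subsetneq U}$; by the universal property of the projective limit they factor uniquely through a ring homomorphism $\lambda_U\colon O_X(U)\to\varprojlim_{V\subsetneq U}O_X(V)=\mathcal O_X(U)$, and the $\lambda_U$ are natural in $U$. Passing to the inductive limit over the open neighbourhoods of $\tilde P$ would give $\lambda_{\tilde P}\colon O_{X,\tilde P}\to\mathcal O_{X,\tilde P}$; the claim is that $\lambda_{\tilde P}$ is an isomorphism. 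I would also record, for use below, the identity that for $U'\subsetneq U$ the restriction map of the presheaf $\mathcal O_X$ is the composite $\mathcal O_X(U)\twoheadrightarrow O_X(U')\xrightarrow{\ \lambda_{U'}\ }\mathcal O_X(U')$, where the first arrow is the projection onto the $U'$-component of the limit (using that $U'$ itself, and every $W\subsetneq U'$, lies in the index set $\{V\subsetneq U\}$, together with the compatibility relations $t_{U'}|_W=t_W$).

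\emph{Step 2: the inverse.} The one non-formal ingredient is that $\tilde P$ has no smallest open neighbourhood: every open $U\supseteq\tilde P$ contains an open $U'$ with $\tilde P\subseteq U'\subsetneq U$. This comes straight from the definition of the topology on $X=\aspec A$, since the $D(f)$ form a sub-basis and any basic neighbourhood of the finite set $\tilde P$ may be shrunk strictly by intersecting with a further $D(g)$ still containing every point of $\tilde P$; in the commutative model this is prime avoidance (one can always delete from an open set a closed point not lying in $\tilde P$). Granting this, I would define $\mu_{\tilde P}\colon\mathcal O_{X,\tilde P}\to O_{X,\tilde P}$ by: represent a germ by $t=(t_V)_{V\subsetneq U}\in\mathcal O_X(U)$ with $\tilde P\subseteq U$, choose $U'$ with $\tilde P\subseteq U'\subsetneq U$, and send the germ of $t$ to the germ of the component $t_{U'}\in O_X(U')$. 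Independence of $U'$ is seen on $U'\cap U''$ (again strictly inside $U$ and still containing $\tilde P$), where the relevant components coincide by compatibility of $t$; independence of the representative follows since two representatives agree in some $\mathcal O_X(U_0)$ with $\tilde P\subseteq U_0\subsetneq U$, and the $U_0$-component of that common element is exactly the chosen $t_{U_0}$.

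\emph{Step 3: the two composites are identities.} For $s\in O_X(U)$ one has $\lambda_U(s)=(s|_V)_{V\subsetneq U}$, so $\mu_{\tilde P}\lambda_{\tilde P}$ sends the germ of $s$ to the germ of $s|_{U'}$, which equals the germ of $s$; hence $\mu_{\tilde P}\circ\lambda_{\tilde P}=\id$. Conversely, for $t=(t_V)_{V\subsetneq U}$ and $\tilde P\subseteq U'\subsetneq U$, the element $\lambda_{U'}(t_{U'})=(t_W)_{W\subsetneq U'}$ is, by the identity of Step 1, precisely the restriction of $t$ to $\mathcal O_X(U')$, so it has the same germ as $t$; hence $\lambda_{\tilde P}\circ\mu_{\tilde P}=\id$. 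The main obstacle in this scheme is isolating and verifying the topological claim of Step 2 (that neighbourhoods of $\tilde P$ can be shrunk strictly); once that is in hand, everything else is the formal interplay of the universal properties of $\varprojlim$ and $\varinjlim$. To cover also the degenerate case in which $\tilde P$ does admit a smallest neighbourhood $U_0$ — so that both stalks collapse to $O_X(U_0)$ and $\mathcal O_X(U_0)$ respectively — one would argue instead directly from $O_X(U_0)=\varprojlim_{\tilde M\subseteq U_0}O^A(\tilde M)$ and the fact that every finite $\tilde M\subsetneq U_0$ already lies in a proper open subset of $U_0$.
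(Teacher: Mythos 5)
Your architecture is considerably more explicit than the paper's own argument, which merely invokes functoriality of limits in both directions and declares the two induced maps inverse ``by uniqueness''; your Steps 1 and 3 are a correct and careful expansion of what the paper leaves implicit. But you have correctly located the one non-formal ingredient, and that is exactly where the gap is: the claim in Step 2 that every open $U\supseteq\tilde P$ contains an open $U'$ with $\tilde P\subseteq U'\subsetneq U$ is asserted rather than proved, and it is false in general. Nothing in the definition of the topology on $\operatorname{aSpec}A$ forbids $\tilde P$ from having a minimal open neighbourhood; a single point can itself be open. For instance, for $A=k\times k$ the two simple (hence spectral) modules $M_1,M_2$ satisfy $D(e_1)=\{M_1\}$ and $D(e_2)=\{M_2\}$ for the idempotents $e_1,e_2$, so each singleton is open and the only $V\subsetneq\{M_i\}$ is $V=\emptyset$. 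The ``prime avoidance'' heuristic does not transfer: the sub-basic sets $D(f)$ are cut out by injectivity of $\eta^A_P(f)$, and no argument is available that one can always delete a point from an open set while retaining openness.

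Moreover, the fallback you offer for the degenerate case does not close the gap; under the paper's literal definition the statement itself becomes problematic there. If $U_0$ is a minimal open neighbourhood of $\tilde P$, then the stalk of $\mathcal O_X$ is $\mathcal O_X(U_0)=\varprojlim_{V\subsetneq U_0}O_X(V)$, a limit over proper open subsets none of which need contain $\tilde P$ --- in the singleton example the index set is $\{\emptyset\}$ and the limit is the zero ring --- whereas the stalk of $O_X$ is $O_X(U_0)$, which is nonzero. So $\mu_{\tilde P}$ cannot even be defined there, and the two stalks genuinely differ. To make the lemma true one must either reinterpret the definition of $\mathcal O_X$ (as an actual sheafification over covers, which is what the paper's subsequent verification of the sheaf axioms suggests is intended) or restrict attention to $\tilde P$ admitting no minimal neighbourhood; your proof as written silently requires one of these repairs, and the final sentence about ``every finite $\tilde M\subsetneq U_0$ already lying in a proper open subset'' does not supply either.
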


\begin{proof} Inductive limits is a contravariant functor. Thus the morphism $O_X\rightarrow\mathcal O_X$ gives a morphism $\mathcal O_{X,\tilde P}\rightarrow O_{X,\tilde P}.$ Also, for each open $U\supseteq \tilde P$ we have by the same reason a homomorphism $O_{X}(U)\rightarrow \mathcal O_{X,\tilde P}$ giving a homomorphism $O_{X,\tilde P}\rightarrow\mathcal O_{X,\tilde P}.$ By uniqueness, these morphisms are inverses.
\end{proof}

\begin{corollary} If $\tilde P\subseteq X$ is a finite set of aprime right $A$-modules, then $\mathcal O_{X,\tilde P}\simeq O^A({\tilde P}).$
\end{corollary}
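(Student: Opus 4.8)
The plan is to reduce to the presheaf $O_X$ via the preceding Lemma and then to pin down the stalk $O_{X,\tilde P}$ by showing that it satisfies the universal property characterising $O^A(\tilde P)$ in Theorem \ref{defTheo}. By the preceding Lemma, $\mathcal O_{X,\tilde P}\simeq O_{X,\tilde P}$, so it suffices to identify $O_{X,\tilde P}=\varinjlim_{\tilde P\subseteq U}\varprojlim_{\tilde M\subseteq U}O^A(\tilde M)$ with $O^A(\tilde P)$, the inner limit running over finite sets $\tilde M$ of simple right $A$-modules contained in $U$.

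First I would assemble the structure maps. For every open $U\supseteq\tilde P$ the canonical maps $A\to O^A(\tilde M)$ are compatible with the transition maps $O^A(\tilde M')\to O^A(\tilde M)$ coming from restriction of deformations to sub-families, hence factor through $A\to O_X(U)$; these are compatible with the presheaf restrictions $O_X(U')\to O_X(U)$, so in the colimit one gets $\rho_{\tilde P}:A\to O_{X,\tilde P}$. Since each $P_i=M_i^c$ is spectral it is the contraction of the unique simple module of a local ring, and these witnesses, together with Proposition \ref{complProp}, furnish an augmentation $\tilde\pi:O_{X,\tilde P}\to\oplus_{i=1}^r\enm_k(P_i)$ with $\tilde\pi\circ\rho_{\tilde P}=\eta$, as in the diagram of Theorem \ref{defTheo}.

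Next I would verify the two conditions of Theorem \ref{defTheo} for $(O_{X,\tilde P},\rho_{\tilde P})$. For invertibility: if $\eta(a)=\id\cdot\alpha$ with $\alpha\neq 0$, then for every $\tilde M$ cofinal in a neighbourhood base of $\tilde P$ the element $a$ also acts by $\id\cdot\alpha$ on the simple modules of $\tilde M$, so $\rho(a)$ is a unit of $O^A(\tilde M)$ by Proposition \ref{complProp} and Lemma \ref{loccompLemma}; compatibility of these inverses along the limit makes $\rho_{\tilde P}(a)$ invertible in $O_{X,\tilde P}$. For the universal mapping property: given $(B,\rho')$ with the same invertibility property, Theorem \ref{defTheo} supplies a unique $\phi:O^A(\tilde P)\to B$, and the recipe $\rho_{\tilde P}(f)\rho_{\tilde P}(g)^{-1}\mapsto\rho'(f)\rho'(g)^{-1}$ defines the required unique factorisation $O_{X,\tilde P}\to B$, since $A$ generates. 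The uniqueness clause of Theorem \ref{defTheo} then yields $O_{X,\tilde P}\simeq O^A(\tilde P)$; one may cross-check this against Example \ref{ComlocEx} in the commutative case and against Corollary \ref{locCor}, both rings being $r$-local with residue modules $P_i$.

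The delicate point, and the main obstacle, is that the inner limit runs over \emph{simple} modules whereas $\tilde P$ may contain non-simple spectral modules, so $\tilde P$ itself cannot serve directly as a cofinal index of the inner system. One must show that, as $U$ shrinks through a neighbourhood base of $\tilde P$, the inverse systems $\{O^A(\tilde M)\}_{\tilde M\subseteq U}$ recover exactly the localisation datum carried by $O^A(\tilde P)$, i.e. that the colimit of the inverse limits collapses onto $O^A(\tilde P)$ and not onto a strictly larger ring. Concretely I expect this to reduce to comparing, for each $U\supseteq\tilde P$, the system $\{O^A(\tilde M)\}$ with $O^A(\tilde P)$ through the restriction maps $O^A(\tilde M\cup\tilde P)\to O^A(\tilde M)$ and $O^A(\tilde M\cup\tilde P)\to O^A(\tilde P)$, and to checking that these glue to a genuine two-sided inverse of the morphism built above; controlling this interchange of $\varinjlim$ and $\varprojlim$ is where the real work lies.
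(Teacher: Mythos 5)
Your first step --- invoking the preceding Lemma to replace $\mathcal O_{X,\tilde P}$ by the presheaf stalk $O_{X,\tilde P}$, and then identifying that stalk with $O^A(\tilde P)$ via the universal property of Theorem \ref{defTheo} --- is exactly the route the paper intends; indeed the paper offers no written proof of this corollary at all, treating it as immediate from the Lemma together with Definition \ref{topsheafDef}. So on the level of strategy you are aligned with the source, and your verification of the two clauses of Theorem \ref{defTheo} (invertibility of $\rho_{\tilde P}(a)$ when $\eta(a)=\id\cdot\alpha$, and the factorisation $\rho_{\tilde P}(f)\rho_{\tilde P}(g)^{-1}\mapsto\rho'(f)\rho'(g)^{-1}$) supplies more detail than the paper does.

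That said, your own final paragraph correctly identifies a step that you do not carry out, and it is a genuine gap rather than a cosmetic one. Definition \ref{topsheafDef}(i) takes the inner inverse limit over finite sets $\tilde M$ of \emph{simple} right $A$-modules in $U$, while the corollary is asserted for an arbitrary finite set $\tilde P$ of \emph{spectral} modules; moreover, as $U$ shrinks through a neighbourhood base of $\tilde P$, each $U$ still contains every module specialising into the topological closure of $\tilde P$, so the index sets $\{\tilde M\subseteq U\}$ are never cofinally reduced to subsets of $\tilde P$. Establishing that $\varinjlim_{U\supseteq\tilde P}\varprojlim_{\tilde M\subseteq U}O^A(\tilde M)$ collapses onto $O^A(\tilde P)$ therefore requires an actual argument --- for instance, exhibiting compatible splittings $O^A(\tilde M\cup\tilde P)\to O^A(\tilde P)$ and showing they become isomorphisms in the colimit, in analogy with $A_{\mathfrak p}=\varinjlim_{f\notin\mathfrak p}A_f$ in the commutative case of Example \ref{ComlocEx}. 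You name this obstacle precisely but leave it as a plan; until that interchange of limits is proved, the proof is incomplete. To be fair, the paper does not address this point either, so the gap you have isolated is one in the source as much as in your write-up.
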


\begin{example} It follows from Example \ref{ComlocEx} that for a commutative ring $A$ we have $\operatorname{aSpec}A\simeq\spec A.$
\end{example}

\begin{corollary} Let $A$ be a finite dimensional $\Bbbk$-algebra, $\Bbbk$ algebraically closed. Then the homomorphism  $$\eta:A\rightarrow O^A(\simp A)$$ where $\simp A$ is the set of all simple (right) $A$-modules, is an isomorphism.
\end{corollary}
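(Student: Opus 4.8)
The plan is to use the classical structure theory of finite-dimensional algebras over an algebraically closed field together with the universal property of the $O$-construction from Theorem \ref{defTheo}. First I would recall that for a finite-dimensional $\Bbbk$-algebra $A$ with $\Bbbk$ algebraically closed, every simple right $A$-module $M_i$ satisfies $\enm_A(M_i)=\Bbbk$, so the hypotheses of Corollary \ref{locCor} are met and $O^A(\simp A)$ is $r$-local (where $r=|\simp A|$) with maximal ideals $\mathfrak m_i$ and residue modules $M_i$. The key point is that the Jacobson radical $J=\operatorname{rad}A$ is nilpotent, say $J^N=0$, and $A/J\simeq\prod_{i=1}^r\enm_\Bbbk(M_i)$, so that $\eta\colon A\to\oplus_i\enm_\Bbbk(M_i)$ appearing in diagram (\ref{locdiag}) is precisely the projection $A\to A/J$.

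Next I would show $\eta\colon A\to O^A(\simp A)$ is injective. An element $a\in\ker\eta$ maps to zero in every $O^A(M_i)$, hence (since $M_i$ ranges over all simples) annihilates every simple module, so $a\in J$; more generally $\rho(a)$ then lies in every $\mathfrak m_i$, i.e. in the "radical" of $O^A(\simp A)$. One shows by the filtration argument underlying Lemma \ref{loccompLemma} and Proposition \ref{complProp} — applied in each local factor — that the intersection of the $\mathfrak m_i$ is nilpotent of the same index coming from $J^N=0$, using that $\rho$ is compatible with the radical filtrations; combined with $\rho$ being induced by the honest map $A\to\hat O^A(\simp A)$ whose kernel on $A$ one identifies, injectivity follows. (The cleanest route: $A$ finite-dimensional is itself complete with respect to its radical filtration, so $A\hookrightarrow\hat O^A(\simp A)$, and $\rho$ factors through $O^A(\simp A)$.)

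For surjectivity I would invoke the universal property directly. The algebra $A$ itself satisfies the defining property of $O^A(\simp A)$: in the diagram $A\xrightarrow{\eta}A/J=\oplus_i\enm_\Bbbk(M_i)$, whenever $\eta(a)$ is a scalar multiple of the identity, $a$ is already a unit in $A$ — indeed $\eta(a)=\id\cdot\alpha$ with $\alpha\ne 0$ forces $a\notin J$, and an element of a finite-dimensional algebra outside the Jacobson radical whose image in $A/J$ is a central unit is a unit in $A$ (its image is invertible modulo the nilpotent ideal $J$, so one lifts the inverse by a geometric series as in Lemma \ref{loccompLemma}). Hence by the universal property of Theorem \ref{defTheo} there is a unique $\phi\colon O^A(\simp A)\to A$ with $\phi\circ\rho=\id_A$; since $\rho$ generates $O^A(\simp A)$ (together with inverses of such units, which already exist in $A$), $\phi$ is a two-sided inverse to $\eta$.

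The main obstacle is the injectivity step, specifically controlling the kernel of $\rho\colon A\to\hat O^A(\simp A)$: one must be sure that the pro-representing hull's filtration restricts to the full radical filtration on the image of $A$, so that no nonzero element of $J^{n}\setminus J^{n+1}$ collapses. This is where finite-dimensionality is essential — it guarantees $J$ is nilpotent so that $A$ is already "formal/complete" in the sense of the definitions above and embeds into the completed object $\hat O^A$, after which the factorization through $O^A(\simp A)$ makes $\eta$ injective. Everything else is formal manipulation with the universal property and the splitting $A/J\simeq\prod\enm_\Bbbk(M_i)$ provided by the algebraically closed base field.
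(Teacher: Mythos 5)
Your argument is correct, but it is not the paper's argument, and the two differ in an instructive way. The paper's proof is a two-line reduction: for finite-dimensional $A$ one has $\hat O^A(\tilde M)\simeq O^A(\tilde M)$, and then the isomorphism $A\simeq \hat O^A(\simp A)$ is the generalized Burnside theorem imported from \cite{ELS17} (this is where the radical filtration and the identification of $\ext^1_A(M_i,M_j)$ with the graded pieces actually get used). You instead work entirely with the universal property of Theorem \ref{defTheo}: since $J=\operatorname{rad}A$ is nilpotent and $A/J\simeq\prod_i\enm_\Bbbk(M_i)$, any $a$ with $\eta(a)=\id\cdot\alpha$, $\alpha\neq 0$, is already a unit in $A$, so $(A,\id_A)$ is itself an object of the kind Theorem \ref{defTheo} quantifies over; the resulting $\phi\colon O^A(\simp A)\to A$ with $\phi\circ\rho=\id_A$, together with surjectivity of $\rho$ onto $O^A(\simp A)=\im\rho$ (the adjoined inverses $\rho(a)^{-1}=\rho(a^{-1})$ are already in the image), gives the isomorphism. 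This buys you a proof that never opens the black box of the pro-representing hull, at the price of leaning entirely on the universal property as stated.

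Two remarks. First, your entire second paragraph on injectivity is both the weakest part of the write-up (the claim that the hull's filtration ``restricts to the full radical filtration'' is exactly the nontrivial content of the generalized Burnside theorem, and you do not prove it) and logically unnecessary: the identity $\phi\circ\rho=\id_A$ that you derive in the surjectivity paragraph already forces $\rho$, hence $\eta$, to be injective. You should delete the filtration discussion and state the retraction argument once. Second, be aware that the soft spot has not disappeared but merely moved: the well-definedness of the map $\phi$ in the paper's proof of Theorem \ref{defTheo} silently requires control of $\ker\rho$ (if $\rho(a)=0$ one needs $\rho'(a)=0$ in every admissible $B$, in particular in $B=A$), so the hard content you were worried about in the injectivity step is in fact hiding inside the universal property you invoke. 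Relative to the paper's stated results your proof is complete; as an independent argument it is circular at exactly that point and would need the \cite{ELS17} input after all.
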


\begin{proof} When $A$ is finite dimensional $\hat O^A(\tilde M)\simeq O^A(\tilde M)$ for any set of simple modules $\tilde M.$ Also, because $\Bbbk$ is algebraically closed, $\mathcal O^A(\operatorname{aSpec}A)\simeq\mathcal O^A(\simp A).$ 
\end{proof}

\begin{definition}\label{Schemedef} An associative scheme is a topological space with a  sheaf of associative rings $(X,\mathcal O_X)$ which has a cover of open affine subsets $X=\cup_{i\in I}\operatorname{aSpec}(A_i)$ where each $A_i$ is an associative ring. A morphism of associative schemes is  a morphism of ringed spaces. The category of associative schemes  is denoted $\operatorname{aSch}.$
\end{definition}

\begin{definition} An associative variety over an algebraically closed field $\Bbbk$ is an irreducible associative variety over $\Bbbk.$
\end{definition}

\begin{corollary} Let $\phi:A\rightarrow B$ be homomorphism of associative rings. Then there is a natural morphism $\operatorname{aSpec}(\phi):\operatorname{aSpec}B\rightarrow\operatorname{aSpec}A$
\end{corollary}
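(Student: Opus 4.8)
The plan is to construct the map $\operatorname{aSpec}(\phi)$ at the level of underlying sets using contraction of spectral modules, then check continuity, and finally produce the morphism of structure sheaves. For the map on points: given a spectral right $B$-module $P$, by definition there is a local ring $C$ and a ring homomorphism $g : B \to C$ with $P = M^c$ for $M$ the unique simple $C$-module. Composing with $\phi$ gives $g \circ \phi : A \to C$, and we set $\operatorname{aSpec}(\phi)(P) := M^c$, the contraction of $M$ along $g\circ\phi$, which is a spectral right $A$-module. Concretely, if $\mathfrak m \subset C$ is the maximal right ideal, $P = B/g^{-1}(\mathfrak m)$ and $\operatorname{aSpec}(\phi)(P) = A/\phi^{-1}(g^{-1}(\mathfrak m)) = A/(g\circ\phi)^{-1}(\mathfrak m)$, so the construction is visibly functorial and, as noted in the discussion after the definition of spectral modules, the contraction of a spectral module is spectral, so this is well-defined. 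I will write $\phi^\ast := \operatorname{aSpec}(\phi)$ for brevity.

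Next I would verify continuity. It suffices to check that preimages of the sub-basic opens $D(f)$, $f \in A$, are open. I claim $(\phi^\ast)^{-1}(D(f)) = D(\phi(f))$. Indeed, for a spectral $B$-module $P = B/\mathfrak q$ (with $\mathfrak q = g^{-1}(\mathfrak m)$), the module $\phi^\ast(P) = A/\phi^{-1}(\mathfrak q)$ has $\eta^A_{\phi^\ast(P)}(f)$ injective on $A/\phi^{-1}(\mathfrak q)$ precisely when $a f \in \phi^{-1}(\mathfrak q)$ forces $a \in \phi^{-1}(\mathfrak q)$; unwinding the definitions, this is equivalent to $\eta^B_P(\phi(f))$ being injective on $B/\mathfrak q$, i.e. $P \in D(\phi(f))$. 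So $(\phi^\ast)^{-1}(D(f)) = D(\phi(f))$ is open, and $\phi^\ast$ is continuous. (One should be slightly careful here because $\eta^A_{\phi^\ast(P)}(f)$ being injective versus being the zero map versus neither are the relevant trichotomies on a module that need not be simple; I would spell out that the relation $af \in \phi^{-1}(\mathfrak q) \iff \phi(a)\phi(f) \in \mathfrak q$ is exactly what transports the injectivity condition.)

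Finally, for the morphism of sheaves $\phi^\# : \mathcal O_{\operatorname{aSpec}A} \to (\phi^\ast)_\ast \mathcal O_{\operatorname{aSpec}B}$, I would use the defining presheaf $O_X(U) = \varprojlim_{\tilde M \subseteq U} O^A(\tilde M)$ together with the functoriality of the $O$-construction. For an open $U \subseteq \operatorname{aSpec}A$, an element of $\mathcal O_{\operatorname{aSpec}A}(U)$ is compatible data in the $O^A(\tilde M)$ over finite families $\tilde M$ of simple modules in $U$; given a finite family $\tilde N$ of simple modules in $(\phi^\ast)^{-1}(U)$, each $N_j$ contracts to a module in $U$, and since $O^A(-)$ maps to $\enm_{\mathbb Z}$ of the modules via $\eta$, the composite $A \to B \to O^B(\tilde N)$ together with the universal property of $O^A$ (Theorem \ref{defTheo}) yields a canonical map $O^A(\text{contractions}) \to O^B(\tilde N)$; passing to limits gives the map on sections, compatible with restriction. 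One must check it is a morphism of \emph{local} (here: $r$-local, via Corollary \ref{locCor}) rings on stalks, which follows because the constructed map sends the maximal ideal $\mathfrak m_i = \ker \eta_{M_i}$ into the corresponding one on the $B$-side by construction. The main obstacle I expect is precisely this last step: matching up the presheaf/sheaf two-step definition from Definition \ref{topsheafDef} with the contraction map, i.e. ensuring that the limits over "all finite subsets of simple modules" on the two sides are indexed compatibly under $\phi^\ast$ and that the universal property is applied to a ring that genuinely fits the diagram (\ref{locdiag}); once the indexing and functoriality bookkeeping is set up, each individual comparison map is forced by uniqueness in Theorem \ref{defTheo}, so naturality is essentially automatic.
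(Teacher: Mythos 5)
Your construction matches the paper's own proof in its essentials: the map on points is given by contraction of spectral modules along $\phi$, and the morphism of structure sheaves comes from the universal property of the $O$-construction (Theorem \ref{defTheo}) applied to the composite $A\rightarrow B\rightarrow O^B(\tilde N)$, then passed through the limits defining the presheaf and sheaf. The paper's proof is exactly this, stated in one diagram plus one sentence, and like you it leaves the verification that the composite actually satisfies the hypothesis of the universal property implicit.

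The one place you go beyond the paper is the continuity check, and there your claimed equality $(\phi^\ast)^{-1}(D(f))=D(\phi(f))$ is not right in general. The contraction $A/\phi^{-1}(\mathfrak q)$ embeds as a sub-$A$-module of $B/\mathfrak q$, so injectivity of right multiplication by $\phi(f)$ on all of $B/\mathfrak q$ does imply injectivity of right multiplication by $f$ on $A/\phi^{-1}(\mathfrak q)$; this gives $D(\phi(f))\subseteq(\phi^\ast)^{-1}(D(f))$. But the converse fails: injectivity on the image of $A$ says nothing about elements $b\in B$ outside that image (take $A=k$, so that $\phi^{-1}(\mathfrak q)=0$ and $(\phi^\ast)^{-1}(D(f))$ is everything for $f\neq 0$, while $D(\phi(f))$ need not be). A containment in that direction does not make $(\phi^\ast)^{-1}(D(f))$ open, so continuity is not actually established by this step; it needs a separate argument. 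Since the paper's proof omits continuity entirely, this does not put you at odds with the paper's route, but the step as written does not close.
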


\begin{proof} The diagram $$\xymatrix{B\ar[drr]_{\eta^B_P}\ar[r]^\phi&A\ar[dr]^{\eta^A_P}\ar[r]^-\rho&A_P\ar[d]^{\eta^{A_P}_P}\\&&\enm_k(P)}$$ gives the map on points sending the aprime $A$-module $P$ to the aprime $B$-module $P.$ The universal property of $A_P$ gives that there is a unique morphism $B_P\rightarrow A_P$ which gives the corresponding morphism of pointed rings, thus a morphism of pre-sheaves, then of sheaves.
\end{proof}

\section{Associative Moduli Schemes}

Consider an associative scheme $X$ as given in Definition \ref{Schemedef}, by which we mean an associative scheme $(X,\mathcal O_X)$ over an algebraically closed field $\Bbbk,$ see Remark \ref{remark 1}. In \cite{ELS17} we prove that the noncommutative deformation functor of sheaves of $\mathcal O_X$-modules has a pro-representing hull. Combining with Definition $\ref{defTheo}$ this gives the following.

\begin{corollary} For a finite set $\mathcal F=\{\mathcal F_1,\dots,\mathcal F_r\}$ of right $\mathcal O_X$-modules, there exists a natural sheaf $\mathcal A_{\mathcal F}$ of $\Bbbk^r$-algebras fitting in the diagram $$\xymatrix{\mathcal O_{X}\ar[dr]_\eta\ar[r]^\rho&\mathcal A_{\mathcal F}\ar[d]^{\oplus\eta_{\mathcal F_i}^{A_{\mathcal F}}}\\&\oplus_{i=1}^r\enm_\Bbbk(\mathcal F_i).}$$ 
\end{corollary}

\begin{proof} Consider the diagram (\ref{diag1}) where we replace $A$ with $\mathcal O_X$ and $\tilde M$ with $\mathcal F.$ Let $O_X(\mathcal F)$ be the sheaf of subalgebras of $\hat{H}(\mathcal F)\otimes_{\Bbbk^r}(\mathcal Hom_k(\mathcal F_i,\mathcal F_j)))=\hat O_X(\mathcal F)$ generated by $\rho(\mathcal O_X)$ together with $\rho(a)^{-1}$ locally whenever $\rho(a)$ is a unit. It follows from the universal property of $\hat O^A(M)$ that $O_X(\mathcal F)$ is independent of the choice of versal morphism $\rho,$ and the natural behaviour follows. Now, let $\mathcal O_X(U)=\underset{\underset{V\subsetneq U}\leftarrow}\lim O_X(V).$
\end{proof}

The associative scheme $X$ is covered by open affine subschemes $U\simeq\aspec A$ so that every point $x\in U\subseteq X$ corresponds to an aprime $\mathcal O_X(U)$-module $M_x.$

\begin{definition} For $x\in X$  the skyscraper sheaf $\mathcal F_x$ in the point $x$ is defined by $$\mathcal F_x=\begin{cases}M_x,&x\in U\\0, &x\notin U\end{cases}$$ where $M_x$ is the $\mathcal O_X(U)$-module corresponding to $x.$
\end{definition}

\begin{definition} $X\in\cat{aSch}/k$ is a fine associative moduli scheme for a functor $F:\cat{aSch}/k\rightarrow\cat{Sets}$ if $\mor_{\cat{aSch}/k}(-,X)\simeq F.$
\end{definition}

A goal of defining associative schemes is to enhance the category of schemes to increase the family of moduli schemes. For an associative scheme $X/\Bbbk$ we have that a simple point, a point corresponding to a simple module, corresponds to a morphism $\aspec \Bbbk\rightarrow X.$ Then the following theorem follows by construction.

\begin{theorem} Any associative scheme $X/\Bbbk,\ \Bbbk$ algebraically closed, is a fine moduli for its $\Bbbk$-rational points.
\end{theorem}

As a stack is not a category, this result solves moduli problems by categorical objects.

\begin{remark} From Remark \ref{remark 1} we see that we can stratify the set of simple modules by their common fields by characteristic.
\end{remark}


\begin{thebibliography}{99.}


\bibitem{ELS17}
E. Eriksen, O. A. Laudal, A. Siqveland,
Noncommutative Deformation Theory. Monographs and Research Notes in Mathematics. CRC Press, Boka Raton, FL, 2017

\bibitem{Lam01}
T. Y. Lam,
A first course in noncommutative rings. Second edition. Graduate Texts in Mathematics, 131. Springer-Verlag, New York, 2001. xx+385 pp. ISBN: 0-387-95183-0 MR1838439

\bibitem{Laudal21} O. A. Laudal, Mathematical models in science, World Scientific Publishing Co. Pte. Ltd., Hackensack, NJ, 2021

%\bibitem{Ros98} A. L. Rosenberg, Noncommutative Schemes, Comp. Math. 112, 93 - 125, 1998.


\bibitem{S23}
Arvid Siqveland, 
Associative Algebraic Geometry,
World Scientific Publishing Co. Pte. Ltd., Hackensack, NJ, 2023
ISBN: 977-1-80061-354-6






\end{thebibliography}
\end{document}